\newtheorem{theorem}{Theorem}
\newtheorem{lemma}{Lemma}
\def\re{\mathop{\mathfrak{Re}}}
\def\im{\mathop{\mathfrak{Im}}}
\begin{document}

\title{On 4-dimensional, conformally flat, \newline
almost {\rm$\varepsilon$}-K\"ahlerian manifolds}

\author{Karina Olszak and Zbigniew Olszak}

\dedicatory{Dedicated to the memory of Jerzy Julian Konderak}

\date{\today}

\keywords{Paraquaternion, regular paraquaternionic function, conformally flat space, almost para-K\"ahlerian manifold, almost pseudo-K\"ahlerian manifold}

\subjclass[2000]{Primary 53C55; secondary 53C25}

\address{Institute of Mathematics and Computer Science, Wroc{\l}aw University of Technology, Wybrze\.ze Wyspia\'nskiego 27, 50-370 Wroc{\l}aw, Poland}

\email{karina.olszak@pwr.wroc.pl, zbigniew.olszak@pwr.wroc.pl}

\begin{abstract}
The local structure of 4-dimensional, conformally flat, almost $\varepsilon$-K\"ahlerian (i.e., almost pseudo-K\"ahlerian and almost para-K\"ahlerian) manifolds is characterized with the help of left-regular and right-regular paraquaternionic functions. Examples of such structures are discussed. 
\end{abstract}

\maketitle

\begin{center}
{\sc 1. Introduction} 
\end{center}

\smallskip
Blair \cite{B} has applied the quaternionic analysis to prove that almost K\"ahler\-ian  manifolds of dimension 4 cannot be of constant non-zero curvature. Recently, Kr\'olikowski \cite{K1,K2,K3} has developed the ideas of applications of quaternionic analysis during investigations of conformally flat almost K\"ahlerian structures. 

In the presented paper, applying the theory of regular paraquaternionic functions introduced by Pogoruy and Rodr\'iguez-Dagnino \cite{PRD}, we propose paraquaternionic analogy of these ideas applied to conformally flat almost pseudo-K\"ahlerian as well as almost para-K\"ahlerian manifolds.

\bigskip
\begin{center}
{\sc 2. Regular paraquaternionic functions} 
\end{center}

\smallskip
Let $\mathbb A$ be the paraquaternion algebra over the real field $\mathbb R$ (which is also called the split-quaternion or coquaternion algebra); cf.\ \cite{C,MS,PRD,R}, etc. 

Elements of $\mathbb A$ are of the form 
$
  {\mathtt x}=x^0+\sum_{\alpha}x^{\alpha}i_{\alpha},
$
where $\sum_{\alpha}$ denotes the sum with respect to $\alpha=1,2,3$, and $x^0,x^{\alpha}\in\mathbb R$, and $1,i_1,i_2,i_3$ is a basis of $\mathbb A$ such that 
$$
   i_1^2=-1,\ i_2^2=i_3^2=1,\ i_1i_2=-i_2i_1=i_3, \\
   i_2i_3=-i_3i_2=-i_1,\ i_1i_3=-i_3i_1=-i_2.
$$
Thus, the multiplication rule of paraquaternions is as follows
\begin{eqnarray*}
  {\mathtt x}{\mathtt y} &=& \Big(x^0+\sum\nolimits_{\alpha}x^{\alpha}i_{\alpha}\Big)
                     \Big(y^0+\sum\nolimits_{\alpha}y^{\alpha}i_{\alpha}\Big) \\
                 &=& x^0y^0 - x^1y^1 + x^2y^2 + x^3y^3 
                     + (x^0y^1 + x^1y^0 - x^2y^3 + x^3y^2)i_1 \\
                 & & \null+(x^0y^2 - x^1y^3 + x^2y^0 + x^3y^1)i_2 
                     + (x^0y^3 + x^1y^2 - x^2y^1 + x^3y^0)i_3.
\end{eqnarray*}

$\mathbb A$ is an associative and non-commutative abstract algebra, which contains zero divisors, nilpotent  elements, and nontrivial idempotents. The paraquaternionic algebra is isomorphic with the Clifford algebras $\mathcal Cl(1,1)\cong\mathcal Cl(0,2)$. The paraquaternions are most familiar through their isomorphism with $2\times 2$ real matrices.

For a paraquaternion $\mathtt x$, define the real part and the imaginary part of $\mathtt x$ by  
$$
  \re({\mathtt x})=x^0,\quad \im({\mathtt x})=\sum\nolimits_{\alpha}x^{\alpha}i_{\alpha},
$$
respectively. The conjugate of $\mathtt x$ is defined by $\overline{\mathtt x}=\re{{\mathtt x}}-\im({\mathtt x})$, and the norm by
$$
  \sqrt{{\mathtt x}\overline{\mathtt x}} = \sqrt{(x^0)^2+(x^1)^2-(x^2)^2-(x^3)^2}.
$$
When ${\mathtt x}\overline{\mathtt x}<0$, then the value of the norm is a complex number from the upper half of
the complex plane. 

The notions of left-regular and right-regular paraquaternionic functions was introduced by Pogoruy and Rodr\'iguez-Dagnino in \cite{PRD} by analogy to the notion of regular quaternionic functions. For quaternionic analysis, we refer e.g. \cite{BLS,F,S}, etc.  

In the sequel, by $\mathcal F$ we will denote the set of all real-differentiable functions $f\colon U\to\mathbb A$, where $U$ is a domain in $\mathbb R^4$.

Consider the following two Cauchy-Fueter type operators $D_L$ and $D_R$ defined on functions $f\in\mathcal F$ by the formulas (we apply denotations different from those in \cite{PRD}) 
$$
  D_Lf = \partial_0f + \sum\nolimits_{\alpha} i_{\alpha}(\partial_{\alpha}f), \quad
  D_Rf = \partial_0f + \sum\nolimits_{\alpha} (\partial_{\alpha}f) i_{\alpha},
$$
where it is supposed that $\partial_j={\partial}/{\partial_j}$. Writing $f$ in the following form 
$
  f=f^0+\sum_{\alpha} f^{\alpha}i_{\alpha},
$
where $f^0,f^{\alpha}\colon U\to\mathbb R$, by straighforward computation, we claim for $D_Lf$,
\begin{eqnarray}
\label{DLfexpl}
 \qquad D_Lf &=& \partial_0f^0-\partial_1f^1+\partial_2f^2+\partial_3f^3 
                 +\big(\partial_1f^0+\partial_0f^1+\partial_3f^2-\partial_2f^3\big)i_1 \\
       & & \null+\big(\partial_2f^0+\partial_3f^1+\partial_0f^2-\partial_1f^3\big)i_2 
            +\big(\partial_3f^0-\partial_2f^1+\partial_1f^2+\partial_0f^3\big)i_3, \nonumber
\end{eqnarray}
and for $D_Rf$, 
\begin{eqnarray}
\label{DRfexpl}
 \qquad D_Rf &=& \partial_0f^0-\partial_1f^1+\partial_2f^2+\partial_3f^3 
                 +\big(\partial_1f^0+\partial_0f^1-\partial_3f^2+\partial_2f^3\big)i_1 \\
       & & \null+\big(\partial_2f^0-\partial_3f^1+\partial_0f^2+\partial_1f^3\big)i_2 
           +\big(\partial_3f^0+\partial_2f^1-\partial_1f^2+\partial_0f^3\big)i_3. \nonumber
\end{eqnarray}

From (\ref{DLfexpl}) and (\ref{DRfexpl}), one notes that it always holds $\re(D_Lf)=\re(D_Rf)$.

The operators $D_L$ and $D_R$ are completely different. In the first lemma, we describe the paraquaternionic functions for which they give the same efects. 

\begin{lemma}
For a paraquaternionic function $f=f^0+\sum_{\alpha}f^{\alpha}i_{\alpha}$, the condition $D_Lf=D_Rf$ holds if and only if $f^{\alpha}=\partial_{\alpha}F$ locally (i.e., in a neighborhood of an arbitrary point), where $F$ is a certain paraquaternionic function. 
\end{lemma}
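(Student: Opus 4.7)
\medskip
\noindent\textbf{Proof proposal.} The plan is to reduce the equality $D_Lf=D_Rf$ to three scalar equations involving only the ``imaginary'' components $f^1,f^2,f^3$ (with $x^0$ playing the role of a parameter) and then recognize those as the classical closedness conditions for a $1$-form on $\mathbb R^3$, to which the Poincar\'e lemma applies.

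First, I would subtract the two explicit formulas (\ref{DLfexpl}) and (\ref{DRfexpl}) coefficient by coefficient. The already observed identity $\re(D_Lf)=\re(D_Rf)$ kills the scalar part, and a short inspection shows that the $i_1,i_2,i_3$ components of the derivatives in $f^0$ cancel pairwise, so the difference reads
\begin{equation*}
  D_Lf-D_Rf \;=\; 2\bigl(\partial_3f^2-\partial_2f^3\bigr)i_1
               \,+\,2\bigl(\partial_3f^1-\partial_1f^3\bigr)i_2
               \,+\,2\bigl(\partial_1f^2-\partial_2f^1\bigr)i_3.
\end{equation*}
Consequently, $D_Lf=D_Rf$ is equivalent to the three integrability relations $\partial_\alpha f^\beta=\partial_\beta f^\alpha$ for all $\alpha,\beta\in\{1,2,3\}$.

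For the ``if'' direction, if there exists a smooth $F$ with $f^\alpha=\partial_\alpha F$ locally, then the equality of mixed partials $\partial_\beta\partial_\alpha F=\partial_\alpha\partial_\beta F$ immediately yields the three relations above. For the ``only if'' direction, I would freeze $x^0$ and consider, on a neighborhood that is an open cube in the remaining variables, the $1$-form $\omega=f^1\,dx^1+f^2\,dx^2+f^3\,dx^3$. The integrability relations say precisely that $\omega$ is closed in the variables $(x^1,x^2,x^3)$, so by the Poincar\'e lemma there is a real-valued primitive $F(x^0,x^1,x^2,x^3)$, smooth in all four variables (smooth dependence on the parameter $x^0$ is automatic from the standard integral formula for the primitive), with $f^\alpha=\partial_\alpha F$ for $\alpha=1,2,3$. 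A real-valued $F$ is in particular paraquaternionic-valued, which matches the statement of the lemma.

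I do not expect any serious obstacle. The only step that deserves a moment of care is the subtraction of (\ref{DLfexpl}) and (\ref{DRfexpl}); from there the statement is nothing more than the Poincar\'e lemma in three variables with $x^0$ as a parameter. Everything else is a routine verification.
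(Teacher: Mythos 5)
Your proposal is correct and follows essentially the same route as the paper: subtract the explicit formulas for $D_Lf$ and $D_Rf$ to isolate the three relations $\partial_3f^2=\partial_2f^3$, $\partial_3f^1=\partial_1f^3$, $\partial_1f^2=\partial_2f^1$, interpret them as closedness of $\omega=f^1dx^1+f^2dx^2+f^3dx^3$ with $x^0$ frozen, and invoke the Poincar\'e lemma. Your write-up is in fact slightly more careful than the paper's, since you make the ``if'' direction and the smooth dependence of the primitive on the parameter $x^0$ explicit.
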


\begin{proof}
Using (\ref{DLfexpl}) and (\ref{DRfexpl}), we find 
$$
  D_Lf-D_Rf = 2\big(\partial_3f^2-\partial_2f^3\big)i_1 
              + 2\big(\partial_3f^1-\partial_1f^3\big)i_2 
              + 2\big(\partial_1f^2-\partial_2f^1\big)i_3. 
$$
Hence, the condition $D_Lf=D_Rf$ holds if and only if 
$$
  \partial_3f^2-\partial_2f^3=0, \quad 
  \partial_3f^1-\partial_1f^3=0, \quad 
  \partial_1f^2-\partial_2f^1=0. 
$$
When fixing $x^0$, the above condtion says that the 1-form $\omega=f^1dx^1+f^2dx^2+f^3dx^3$ is closed. By the famous Poincare lemma, the form $\omega$ is locally a boundary form, say $\omega=dF$ for a certain paraquaternionic function $F$. This completes the proof. 
\end{proof}

\begin{lemma}
The operators $D_L$ and $D_R$ commute, that is, for any $f\in\mathcal F$, 
\begin{equation}
\label{commDLDR}
  D_LD_Rf-D_RD_Lf=0.
\end{equation}
\end{lemma}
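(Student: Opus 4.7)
The plan is to compute both $D_L D_R f$ and $D_R D_L f$ directly from their defining formulas, leaving the paraquaternionic basis elements $i_\alpha$ in their respective positions (left or right of the function) without ever trying to move them past each other. Since the $i_\alpha$ are constants (independent of the coordinates), they simply pass through the partial derivatives. Once both second-order expressions are written out, the two computations will agree except possibly in the mixed second-order terms, and there the equality of mixed partials (Schwarz's theorem, valid for any $f\in\mathcal F$ of class at least $C^2$) will close the gap.

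Concretely, first I would expand
\[
  D_L D_R f = D_L\Big(\partial_0 f + \sum\nolimits_\beta (\partial_\beta f)i_\beta\Big)
  = \partial_0^2 f + \sum\nolimits_\beta (\partial_0\partial_\beta f)i_\beta
  + \sum\nolimits_\alpha i_\alpha(\partial_\alpha\partial_0 f)
  + \sum\nolimits_{\alpha,\beta} i_\alpha(\partial_\alpha\partial_\beta f)i_\beta.
\]
Then I would expand the analogous
\[
  D_R D_L f = \partial_0^2 f + \sum\nolimits_\alpha i_\alpha(\partial_0\partial_\alpha f)
  + \sum\nolimits_\beta (\partial_\beta\partial_0 f)i_\beta
  + \sum\nolimits_{\alpha,\beta} i_\alpha(\partial_\beta\partial_\alpha f)i_\beta.
\]
Three of the four groups of terms coincide immediately (using $\partial_0\partial_\alpha=\partial_\alpha\partial_0$), leaving
\[
  D_L D_R f - D_R D_L f
  = \sum\nolimits_{\alpha,\beta} i_\alpha\bigl[(\partial_\alpha\partial_\beta
  - \partial_\beta\partial_\alpha)f\bigr] i_\beta,
\]
which vanishes by the symmetry of mixed partial derivatives.

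There is essentially no obstacle; the only subtlety worth flagging is to resist the temptation to commute $i_\alpha$ past $i_\beta$ (which would introduce sign issues from the paraquaternionic multiplication table), since keeping both basis elements sandwiched on the same sides is exactly what makes the two double-sums have identical structure. If one prefers, the same conclusion can be obtained by reading off the explicit real components from formulas \textup{(\ref{DLfexpl})} and \textup{(\ref{DRfexpl})} applied iteratively, but the coordinate-free computation above is substantially shorter and makes clear that \textup{(\ref{commDLDR})} is nothing more than Schwarz's theorem packaged through the two Cauchy--Fueter operators.
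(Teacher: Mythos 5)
Your proof is correct and follows essentially the same approach as the paper, which simply notes that the conclusion follows from the commutativity of mixed partial derivatives and the associativity of paraquaternionic multiplication; you have merely carried out explicitly the expansion that the paper declares ``obvious.'' The only point worth noting is that Schwarz's theorem requires $f$ to be at least $C^2$, a regularity assumption the paper leaves implicit in its definition of $\mathcal F$.
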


\begin{proof}
Since the partial derivatives of functions $f^i$, $ 0\leqslant i\leqslant 3$, commute and the paraquaternionic multiplication is associative, the conclusion follows obviously.
\end{proof}

Let $f\in\mathcal F$. $f$ is called left-regular if it satisfies the Cauchy-Riemann-Feuter type equation $D_Lf=0$. And similarly, $f$ is called right-regular if it satisfies the Cauchy-Riemann-Feuter type equation $D_Rf=0$. 

Having (\ref{DLfexpl}) and (\ref{DRfexpl}), we see that $f$ is left-regular if and only if
\begin{equation}
\label{eqDleft}
  \left\{
  \begin{array}{l}
  \partial_0f^0-\partial_1f^1+\partial_2f^2+\partial_3f^3=0,\\[+3pt]
  \partial_1f^0+\partial_0f^1+\partial_3f^2-\partial_2f^3=0,\\[+3pt]
  \partial_2f^0+\partial_3f^1+\partial_0f^2-\partial_1f^3=0,\\[+3pt]
  \partial_3f^0-\partial_2f^1+\partial_1f^2+\partial_0f^3=0,
  \end{array}
  \right.
\end{equation}
and $f$ is right-regular if and only if
\begin{equation}
\label{eqDright}
  \left\{
  \begin{array}{l}
  \partial_0f^0-\partial_1f^1+\partial_2f^2+\partial_3f^3=0,\\[+3pt]
  \partial_1f^0+\partial_0f^1-\partial_3f^2+\partial_2f^3=0,\\[+3pt]
  \partial_2f^0-\partial_3f^1+\partial_0f^2+\partial_1f^3=0,\\[+3pt]
  \partial_3f^0+\partial_2f^1-\partial_1f^2+\partial_0f^3=0.
  \end{array}
  \right.
\end{equation}

\begin{lemma}
(a) If $f$ is a left-regular paraquaternionic function, then $h=D_Rf$ is also left-regular paraquaternionic and $\re(h)=0$. (b) If $f$ is a right-regular paraquaternionic function, then $h=D_Lf$ is also right-regular paraquaternionic and $\re(h)=0$. 
\end{lemma}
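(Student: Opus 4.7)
The plan is to observe that both claims follow immediately from two facts already established in the excerpt: the commutation identity $D_L D_R = D_R D_L$ (Lemma 2) and the identity $\re(D_L f) = \re(D_R f)$ noted right after equations (\ref{DLfexpl}) and (\ref{DRfexpl}).

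For part (a), I would argue as follows. Assume $f$ is left-regular, so $D_L f = 0$. Setting $h = D_R f$, I compute
\[
  D_L h = D_L D_R f = D_R D_L f = D_R(0) = 0,
\]
where the middle equality uses (\ref{commDLDR}). Hence $h$ is left-regular. For the second assertion, since $D_L f = 0$ we have $\re(D_L f) = 0$, and by the identity $\re(D_L f) = \re(D_R f)$ we conclude $\re(h) = \re(D_R f) = 0$.

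Part (b) is symmetric: assuming $D_R f = 0$ and setting $h = D_L f$, I apply (\ref{commDLDR}) in the form $D_R D_L f = D_L D_R f = 0$, giving $D_R h = 0$, and the vanishing of $\re(h)$ again follows from $\re(D_L f) = \re(D_R f) = 0$.

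There is essentially no obstacle here; the whole argument is a one-line application of the commutation lemma plus the trivial observation about real parts, and I would simply present it in the order above without any computation.
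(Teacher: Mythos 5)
Your proposal is correct and follows essentially the same route as the paper: the commutation identity (\ref{commDLDR}) gives the regularity of $h$, and the observation $\re(D_Lf)=\re(D_Rf)$ (which the paper just re-derives explicitly from (\ref{DLfexpl}) and (\ref{DRfexpl})) gives $\re(h)=0$. No gaps.
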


\begin{proof}
(a) Since $D_Lf=0$, by (\ref{commDLDR}), we have $D_Lh=D_LD_Rf=D_RD_Lf=0$. Moreover, using formula (\ref{DRfexpl}), we find 
\begin{equation}
\label{reh1}
  \re(h)=\re(D_Rf)=\partial_0f^0-\partial_1f^1+\partial_2f^2+\partial_3f^3.
\end{equation}
But, by (\ref{DLfexpl}), the right hand side of (\ref{reh1}) is the same as $\re(D_Lf)$, which is equal to 0 since $D_Lf=0$.

(b) Since $D_Rf=0$, by (\ref{commDLDR}), we have $D_Rh=D_RD_Lf=D_LD_Rf=0$. Moreover, using formula (\ref{DLfexpl}), we find 
\begin{equation}
\label{reh2}
  \re(h)=\re(D_Lf)=\partial_0f^0-\partial_1f^1+\partial_2f^2+\partial_3f^3.
\end{equation}
But, by (\ref{DRfexpl}), the right hand side of (\ref{reh2}) is the same as $\re(D_Rf)$, which is equal to 0 since $D_Rf=0$.
\end{proof}

The simplest examples of left-regular and right-regular paraquaternionic functions are the so-called paraquaternionic Fueter polynomials
$$
  \zeta_{\alpha}(x)=x^{\alpha}-x^0i_{\alpha},\quad x\in\mathbb R^4.
$$

From the result achieved in \cite{PRD} it follows that a paraquaternionic function is left-regular on $U$ if and only if it can be expanded, in a neighborhood of an arbitrary point $x_0\in U$, into a series 
$$
  f(x) = f(x_0) + 
  \sum_{n=1}^{\infty}\Big(\sum_{\alpha_1,\alpha_2,\ldots,\alpha_n=1}^3
           \zeta_{\alpha_1}(x-x_0)
           \zeta_{\alpha_2}(x-x_0)\cdots
           \zeta_{\alpha_n}(x-x_0)
           a_{\alpha_1\alpha_2\cdots\alpha_n}\Big),
$$
where $a_{\alpha_1\alpha_2\cdots\alpha_n}\in\mathbb A$ for any $1\leqslant\alpha_i\leqslant3$, $1\leqslant i\leqslant n$ and $n\in\mathbb N$. 

Hence, we claim that the paraquaternionic functions of the form 
$$
  f(x) = 
  \sum_{k=1}^n\Big(\sum_{\alpha_1,\alpha_2,\ldots,\alpha_n=1}^3
           \zeta_{\alpha_1}(x)\zeta_{\alpha_2}(x)\cdots\zeta_{\alpha_k}(x)
           a_{\alpha_1\alpha_2\cdots\alpha_k}\Big),
$$
where $a_{\alpha_1\alpha_2\cdots\alpha_k}\in\mathbb A$, are left-regular. 

In a similar way, we can claim tha the paraquaternionic functions of the form 
$$
  f(x) = 
  \sum_{k=1}^n \Big(\sum_{\alpha_1,\alpha_2,\ldots,\alpha_n=1}^3
   a_{\alpha_1\alpha_2\cdots\alpha_k}
  \zeta_{\alpha_1}(x)\zeta_{\alpha_2}(x)\cdots\zeta_{\alpha_k}(x)\Big),
$$
where $a_{\alpha_1\alpha_2\cdots\alpha_k}\in\mathbb A$, are right-regular. 

\bigskip
\begin{center}
{\sc 3. Conformal flatness of 4-dimensional almost {\rm$\varepsilon$}-K\"ahlerian manifolds}
\end{center}

\smallskip
Let  $M$ a $4$-dimensional, connected, differentiable manifold. Assume that $M$ is endowed with a pseudo-Riemannian metric $g$ of Norden signature $(--++)$ and a $(1,1)$-tensor field $J$ such 
$$
  J^2=\varepsilon I,\quad g(JX,JY)=-\varepsilon g(X,Y)
$$
for any $X,Y\in\mathfrak X(M)$, where $\varepsilon$ is a constant equal to $+1$ or $-1$. 

If $\varepsilon=-1$, then the manifold $M(J,g)$ is an almost pseudo-Hermitian or indefinite almost Hermitian (cf. \cite{MY,N}, etc.). If $\varepsilon=+1$, then the manifold $M(J,g)$ is an almost para-Hermitian (cf. \cite{BFFV,CFG}, etc.). For simplicity, we will say that $M(J,g)$ or just $M$ is an almost $\varepsilon$-Hermitian manifold.

For an almost $\varepsilon$-Hermitian manifold $M(J,g)$, let $\varOmega$ be the skew-symmetric $(0,2)$-tensor file defined by $g(X,JY)=\varOmega(X,Y)$. $\varOmega$ is an almost symplectic form on $M$ and it is called the fundamental form of this manifold.

Let $M(J,g)$ be a 4-dimensional almost $\varepsilon$-Hermitian manifold. Assume additionally that the manifold $M(J,g)$ is (locally) conformally flat. This means that for any point $p\in M$ there exist an open neighborhood $U\subset M$ of the point $p$ and a positive function $h\colon U\to\mathbb R$ such that $g=hG$, where $G$ is a flat pseudo-Riemannian metric. We restrict our consideration to the subset $U$. Without lose of generality, we assume that $U$ is a coordinate neighborhood with coordinates $(x^0,x^1,x^2,x^3)$ (it will be useful to number the indices from 0 to 3) and the metric $G$ has components 
\begin{equation}
\label{compG}
  G_{11}=G_{22}=-1,\quad G_{33}=G_{44}=1,\quad\mbox{and} \ G_{ij}=0 \ \mbox{otherwise}. 
\end{equation}
Thus, the metric $g$ has the following components 
$$
  g_{11}=g_{22}=-h,\quad g_{33}=g_{44}=h,\quad\mbox{and} \ g_{ij}=0 \ \mbox{otherwise}. 
$$
Let $J^i_j$ be the components of the tensor field $J$. Now, we compute the components $J^i_j$ having the equalities
$$
  \sum\limits_s J^i_s J^s_j=\varepsilon\delta^i_j,\quad 
  \sum\limits_s g_{is} J^s_j+\sum\limits_s g_{js} J^s_i=0.
$$
By some standard computations, we obtain the following two possibilities for the matrix $[J^i_j]$,
\begin{equation}
\label{compJ1}
  [J^i_j] = \left[
  \begin{array}{@{\extracolsep{1mm}}cccc}
    0 &  a &  b &  c \\
   -a &  0 & -c &  b \\
    b & -c &  0 & -a \\
    c &  b &  a &  0
  \end{array}
  \right],
\end{equation}
or
\begin{equation}
\label{compJ2}
  [J^i_j] = \left[
  \begin{array}{@{\extracolsep{1mm}}cccc}
    0 &  a &  b &  c \\
   -a &  0 &  c & -b \\
    b &  c &  0 &  a \\
    c & -b & -a &  0
  \end{array}
  \right],
\end{equation}
$a,b,c$ being functions such that $a^2-b^2-c^2=-\varepsilon$. Consequently, the matrices of the components $\varOmega_{ij}$ of the fundamental form are of the form
\begin{equation}
\label{compOmega1}
  [\varOmega_{ij}] = \left[
  \begin{array}{@{\extracolsep{1mm}}cccc}
      0 & -f^1 & -f^2 & -f^3 \\
    f^1 &    0 &  f^3 & -f^2 \\
    f^2 & -f^3 &    0 & -f^1 \\
    f^3 &  f^2 &  f^1 &    0
  \end{array}
  \right],
\end{equation}
or
\begin{equation}
\label{compOmega2}
  [\varOmega_{ij}] = \left[
  \begin{array}{@{\extracolsep{1mm}}cccc}
      0 & -f^1 & -f^2 & -f^3 \\
    f^1 &    0 & -f^3 &  f^2 \\
    f^2 &  f^3 &    0 &  f^1 \\
    f^3 & -f^2 & -f^1 &    0
  \end{array}
  \right],
\end{equation}
where we supposed $f^1=ha$, $f^2=hb$ and $f^3=hc$. The functions $f^1$, $f^2$ and $f^3$ satisfy the condition $(f^1)^2-(f^2)^2-(f^3)^2 = -\varepsilon h^2$. 

The coordinate neighborhood $(U,(x^0,x^1,x^2,x^3))$ constructed in the above will be said to be the canonical coordinate neighborhood of a conformally flat 4-dimen\-sion\-al almost $\varepsilon$-Hermitian. 

Assume that for an almost $\varepsilon$-Hermitian manifold $M(J,g)$, the fundamental form $\varOmega$ is closed ($d\varOmega=0$, that is, $\varOmega$ is a symplectic form). If $\varepsilon=-1$, then the manifold $M(J,g)$ is called almost pseudo-K\"ahlerian or indefinite almost K\"ahlerian (cf.\ e.g.\ \cite{DD,SY}). If $\varepsilon=+1$, then the manifold $M(J,g)$ is called almost para-K\"ahlerian (cf.\ e.g.\ \cite{IZ}). For simplicity, we will say that $M(J,g)$ or just $M$ is an almost $\varepsilon$-K\"ahlerian manifold.

\begin{theorem}
\label{theo}
A 4-dimensional, conformally flat, almost $\varepsilon$-Hermitian manifold \linebreak $M(J,g)$ is almost $\varepsilon$-K\"ahlerian if and only if for any point $p\in M$ there is a canonical coordinate neighborhood $(U,(x^0,x^1,x^2,x^3))$, $p\in U$, on which there exist functions $f^1,f^2,f^3$ and a positive function $h$ such that \\
(i) the functions $f^1,f^2,f^3,h$ are related by the condition
\begin{equation}
\label{funch}
  (f^1)^2-(f^2)^2-(f^3)^2 = -\varepsilon h^2, 
\end{equation}
and the paraquaternionic function $f=f_1i_1+f^2i_2+f^3i_3$ is left-regular or right-regular; \\
(ii) $g=hG$, $G$ being the standard flat Norden type metric (cf. (\ref{compG})); \\
(iii) if $f$ is left-regular, then $J$ is given by (\ref{compJ1}), and if $f$ is right-regular, then $J$ is give by (\ref{compJ2}), where in the both cases, $a,b,c$ are the functions given by 
$$
  a=\frac{f^1}{h},\quad b=\frac{f^2}{h},\quad c=\frac{f^3}{h}.
$$
\end{theorem}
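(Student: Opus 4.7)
The plan is to translate the condition $d\varOmega=0$ directly into a PDE system on the components $f^1,f^2,f^3$ and then recognize this system as (\ref{eqDleft}) or (\ref{eqDright}) specialized to the paraquaternionic function $f=0+f^1i_1+f^2i_2+f^3i_3$ (so that $f^0\equiv 0$).

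For the forward direction, I would start with an arbitrary point $p\in M$ and the canonical coordinate neighborhood constructed before the theorem statement, so that $g=hG$ with $G$ given by (\ref{compG}) and $[\varOmega_{ij}]$ is either (\ref{compOmega1}) or (\ref{compOmega2}), with $f^{\alpha}=ha,hb,hc$ automatically satisfying (\ref{funch}) because $a^2-b^2-c^2=-\varepsilon$. The almost $\varepsilon$-K\"ahlerian hypothesis is $d\varOmega=0$, which in coordinates is equivalent to the four cyclic identities
\[
  \partial_i\varOmega_{jk}+\partial_j\varOmega_{ki}+\partial_k\varOmega_{ij}=0,\quad 0\leqslant i<j<k\leqslant 3.
\]
The key computational step is to write out these four identities for the two cases. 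In case (\ref{compOmega1}) the four cyclic sums (with $ijk$ equal to $123,023,013,012$ respectively) give exactly the four lines of (\ref{eqDleft}) with $f^0=0$; in case (\ref{compOmega2}) they give exactly the four lines of (\ref{eqDright}) with $f^0=0$. So $f$ is left-regular or right-regular, respectively, according to whether $J$ has the form (\ref{compJ1}) or (\ref{compJ2}).

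For the converse, given $h>0$ and functions $f^1,f^2,f^3$ subject to (\ref{funch}) with $f=f^1i_1+f^2i_2+f^3i_3$ left-regular (resp.\ right-regular), define $g=hG$ and set $a=f^1/h$, $b=f^2/h$, $c=f^3/h$, so that $a^2-b^2-c^2=-\varepsilon$. Define $J$ by (\ref{compJ1}) (resp.\ (\ref{compJ2})). A direct check of $\sum_s J^i_s J^s_j=\varepsilon\delta^i_j$ and $\sum_s(g_{is}J^s_j+g_{js}J^s_i)=0$ uses only (\ref{compG}), the matrix pattern of $J$, and the algebraic relation on $a,b,c$. Then $\varOmega_{ij}=g_{is}J^s_j$ is read off as (\ref{compOmega1}) or (\ref{compOmega2}), and reversing the translation used in the forward direction shows that left-regularity (resp.\ right-regularity) of $f$ forces all four cyclic sums to vanish, hence $d\varOmega=0$.

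The main obstacle is essentially bookkeeping: confirming that the four cyclic sums line up, sign for sign, with the four equations of (\ref{eqDleft}) or (\ref{eqDright}) in each of the two cases, and confirming the algebraic identities $J^2=\varepsilon I$ and $g(JX,JY)=-\varepsilon g(X,Y)$ from $a^2-b^2-c^2=-\varepsilon$. No new analytic input is needed; the work is entirely a careful matching of indices and signs between the two equivalent systems.
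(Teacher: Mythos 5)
Your proposal is correct and follows essentially the same route as the paper: in the canonical coordinate neighborhood, the cyclic-sum form of $d\varOmega=0$ is matched, case by case for (\ref{compOmega1}) and (\ref{compOmega2}), with the systems (\ref{eqDleft}) and (\ref{eqDright}) specialized to $f^0=0$. The only difference is that you spell out the converse (which the paper dismisses as ``clear''), and your verification of $J^2=\varepsilon I$ and the compatibility with $g$ from $a^2-b^2-c^2=-\varepsilon$ is exactly the reversal of the construction already carried out before the theorem.
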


\begin{proof}
Let $M(J,g)$ be a  4-dimensional, conformally flat, almost $\varepsilon$-Hermitian manifold. In a canonical coordinate neighborhood, the components of the exterior derivative $d\varOmega$ are of the form
\begin{equation}
\label{compdOm}
  (d\varOmega)_{ijk} = \frac13\big(
  \partial_i\varOmega_{jk}+\partial_j\varOmega_{ki}+\partial_k\varOmega_{ij}\big).
\end{equation}

Let us assume that $M(J,g)$ is almost $\varepsilon$-K\"ahlerian. Since $d\varOmega=0$, from (\ref{compdOm}), we obtain 
\begin{equation}
\label{dOm}
  \partial_i\varOmega_{jk}+\partial_j\varOmega_{ki}+\partial_k\varOmega_{ij}=0.
\end{equation}

In the case when $\varOmega$ is of the form (\ref{compOmega1}), the relations (\ref{dOm}) can be written equivalently as the following system
$$
  \left\{
  \begin{array}{r}
  -\partial_1f^1+\partial_2f^2+\partial_3f^3=0,\\[+3pt]
   \partial_0f^1+\partial_3f^2-\partial_2f^3=0,\\[+3pt]
   \partial_3f^1+\partial_0f^2-\partial_1f^3=0,\\[+3pt]
  -\partial_2f^1+\partial_1f^2+\partial_0f^3=0.
  \end{array}
  \right.
$$
Comparing this system with (\ref{eqDleft}) we claim that the paraquaternionic function $f=f^1i_1+f^2i_2+f^3i_3$ is left-regular.

In the case when $\varOmega$ is of the form (\ref{compOmega2}), the relations (\ref{dOm}) can be written equivalently as the following system
$$
  \left\{
  \begin{array}{r}
  -\partial_1f^1+\partial_2f^2+\partial_3f^3=0,\\[+3pt]
   \partial_0f^1-\partial_3f^2+\partial_2f^3=0,\\[+3pt]
  -\partial_3f^1+\partial_0f^2+\partial_1f^3=0,\\[+3pt]
   \partial_2f^1-\partial_1f^2+\partial_0f^3=0.
  \end{array}
  \right.
$$
Comparing this system with (\ref{eqDright}), we claim that the paraquaternionic function $f=f^1i_1+f^2i_2+f^3i_3$ is right-regular.

Recall that the function $h$ deforming the metric $g$ to a flat metric is related to the functions $f^1,f^2,f^3$ by (\ref{funch}).

The converse is clear.
\end{proof}

In view of Theorem \ref{theo}, to construct examples of 4-dimensional conformally flat almost $\varepsilon$-K\"ahlerian structures it is necessary and sufficient to have left-regular and right-regular paraquaternionic functions $f=f^1i_1+f^2i_2+f^3i_3$ for which $(f^1)^2-(f^2)^2-(f^3)^2\not=0$. Using the results from the previous section, one can find such functions. We are going to state only two concrete examples of such functions. Namely, \\
(a) the function 
\begin{eqnarray*}
  f(x) &=& (\zeta_1(x)+\zeta_2x)+\zeta_3(x))(\null-i_2+i_3) \\
       &=& 2x^0i_1+(x^0-x^1-x^2-x^3)i_2+(x^0+x^1+x^2+x^3)i_3,
\end{eqnarray*}
which is left-regular and realizes the desired conditions on any domain in $\mathbb R^4$ on which $|x^0|\neq|x^1+x^2+x^3|$; and similarly \\
(b) the function 
\begin{eqnarray*}
  f(x) &=& (i_2-i_3)(\zeta_1(x)+\zeta_2x)+\zeta_3(x)) \\
       &=& 2x^0i_1+(x^0+x^1+x^2+x^3)i_2+(x^0-x^1-x^2-x^3)i_3,
\end{eqnarray*}
which is right-regular and realizes the desired conditions on any domain in $\mathbb R^4$ on which $|x^0|\neq|x^1+x^2+x^3|$. Having these functions, one can write down the almost $\varepsilon$-K\"ahlerian structures explicitly. 
  


\begin{thebibliography}{99}

\bibitem{B}
D. E. Blair,
{\it Nonexistence of 4-dimensional almost Kaehler manifolds of constant curvature},
Proc. Amer. Math. Soc. {\bf110} (1990), 1033-1039.

\bibitem{BFFV}
A. Borowiec, M. Ferraris, M. Francaviglia,  I. Volovich, 
{\it Almost-complex and almost-product Einstein manifolds from a variational principle}, 
J. Math. Phys. {\bf40} (1999), 3446-3464.

\bibitem{BLS}
J. Bure\v{s}, R. L\'avi\v{c}ka and V. Sou\v{c}ek,
{\it Elements of quaternionic analysis and random transforms},
Textos de Matem\'atica. S\'erie B 42, Coimbra: Universidade de Coimbra, Departamento
de Matem\'atica. vii, (2009). 

\bibitem{C}
J. Cockle, 
{\it  On systems of algebra involving more than one imaginary}, 
Philos. Mag., III. Ser. {\bf35} (1849), 434-435.

\bibitem{CFG}
V. Cruceanu, P. Fortuny and P. M. Gadea,
{\it A survey on paracomplex geometry},
Rocky Mountain J. Math. {\bf 26} (1996), 83--115.

\bibitem{DD}
J. Davidov, J. C. D{\'\i}az-Ramos, E. Garc{\'\i}a-R{\'\i}o, Y. Matsushita, O. Mu\v{s}karov and R. V\'azquez-Lorenzo,
{\it Almost K\"ahler Walker 4-manifolds},
J. Geom. Phys. {\bf57} (2007), 1075-1088.

\bibitem{F}
R. Fueter,
{\it Analitische Functionen einer Quaternionen variablen},
Comment. Math. Helv. {\bf4} (1932), 9-20.

\bibitem{IZ}
S. Ivanov and S. Zamkovoy,
{\it ParaHermitian and paraquaternionic manifolds},
Diff. Geom. Appl. {\bf23} (2005), 205-234.

\bibitem{K1}
W. Kr\'olikowski,
{\it On Clifford-type structures},
Diss. Math. (Rozprawy Mat.) {\bf440} (2006), Inst. Math., Polish Acad. Sci., Warsaw. 

\bibitem{K2}
W. Kr\'olikowski,
{\it On $4$-dimensional locally conformally flat almost K\"ahler manifolds},
Arch. Math. Brno {\bf42} (2006), 215-223.

\bibitem{K3}
W. Kr\'olikowski,
{\it Quaternionic condition for the existence of 4-dimensional locally conformally flat almost K\"ahler manifolds},
Demonstr. Math. {\bf39} (2006), 195-202.

\bibitem{MY}
Y. Matsushita, 
{\it Pseudo-Chern classes of an almost pseudo-Hermitian manifold}, 
Trans. Amer. Math. Soc. {\bf301} (1987), 665-677. 

\bibitem{MS}
S. Marchiafava, 
{\it Twistorial maps between (para)quaternionic projective spaces}, 
Bull. Math. Soc. Sci. Math. Roumanie {\bf52}({\bf100}) (2009), 321-332.

\bibitem{N}
R. K. Nagaich,
{\it Constancy of holomorphic sectional curvature in indefinite almost Hermitian manifolds},
Kodai Math. J. {\bf16} (1993), 327-331.

\bibitem{PRD}
A. A. Pogoruy and R. M. Rodr\'iguez-Dagnino,
{\it Some algebraic and analytical properties of coquaternion algebra},
Adv. Appl. Clifford Alg. {\bf20} (2010), 79-84.

\bibitem{R}
B. Rosenfeld, 
{\it Geometry of Lie groups}, 
Mathematics and its Applications Vol. 393, Kluwer Academic Publishers, Gordrecht, 1977.

\bibitem{SY}
K. Sekigawa and A. Yamada,
{\it Compact indefinite almost K\"ahler Einstein manifolds},
Geom. Dedicata {\bf132} (2008), 65-79.

\bibitem{S}
A. Sudbury,
{\it Quaternionic analysis},
Math. Proc. Camb. Philos. Soc. {\bf85} (1979), 199-225.

\end{thebibliography}
\end{document}